\definecolor{red}{RGB}{255,25,25}
\definecolor{blue}{RGB}{25,50,200}
\newtheorem{theorem}{Theorem}[section]
\crefname{theorem}{Theorem}{Theorems}
\newtheorem{lemma}[theorem]{Lemma}
\crefname{lemma}{Lemma}{Lemmas}
\newtheorem{proposition}[theorem]{Proposition}
\crefname{proposition}{Proposition}{Propositions}
\crefname{prop}{Proposition}{Propositions}
\newtheorem{corollary}[theorem]{Corollary}
\crefname{corollary}{Corollary}{Corollaries}
\crefname{cor}{Corollary}{Corollaries}
\newtheorem{conjecture}[theorem]{Conjecture}
\crefname{conjecture}{Conjecture}{Conjectures}
\crefname{conj}{Conjecture}{Conjectures}
\newtheorem*{conj*}{Conjecture}
\crefname{conj}{Conjecture}{Conjectures}
\theoremstyle{definition}
\newtheorem{definition}[theorem]{Definition}
\crefname{definition}{Definition}{Definitions}
\crefname{defn}{Definition}{Definitions}
\newtheorem{example}[theorem]{Example}
\crefname{example}{Example}{Examples}
\crefname{notation}{Notation}{Notation}
\newtheorem*{notation*}{Notation}
\crefname{notation}{Notation}{Notation}
\newtheorem*{convention*}{Convention}
\crefname{convention}{Convention}{Convention}
\crefname{problem}{Problem}{Problems}
\newtheorem{question}[theorem]{Question}
\crefname{question}{Question}{Questions}
\crefname{condition}{Condition}{Conditions}
\crefname{assumption}{Assumption}{Assumptions}
\theoremstyle{remark}
\crefname{rmk}{Remark}{Remarks}
\newtheorem*{rmk*}{Remark}
\crefname{rmk}{Remark}{Remarks}
\newtheorem{remark}[theorem]{Remark}
\crefname{remark}{Remark}{Remarks}
\crefname{fact}{Fact}{Facts}
\crefname{claim}{Claim}{Claims}
\newtheorem*{claim*}{Claim}
\crefname{claim}{Claim}{Claims}
\crefname{step}{Step}{Steps}
\crefname{case}{Case}{Cases}
\numberwithin{equation}{section}
\newcommand{\ol}[1]{\overline{#1}}
\newcommand{\arxiv}[1]{\href{https://arxiv.org/abs/#1}{{\tt arXiv:#1}}}
\def\MR#1{\href{http://www.ams.org/mathscinet-getitem?mr=#1}{MR#1}}
\newcommand{\bC}{\mathbf{C}}
\newcommand{\bF}{\mathbf{F}}
\newcommand{\bQ}{\mathbf{Q}}
\newcommand{\bR}{\mathbf{R}}
\newcommand{\bZ}{\mathbf{Z}}
\newcommand{\bk}{\mathbbm{k}}
\newcommand{\sT}{\mathsf{T}}
\newcommand{\Sp}{\mathrm{Sp}}
\newcommand{\alb}{\operatorname{alb}}
\newcommand{\Alb}{\operatorname{Alb}}
\newcommand{\CH}{\mathsf{CH}}
\newcommand{\Char}{\operatorname{char}}
\newcommand{\cl}{\operatorname{cl}}
\newcommand{\et}{{\textrm{\'et}}}
\newcommand{\Frob}{\mathsf{Frob}}
\newcommand{\HP}{\mathsf{HP}}
\newcommand{\id}{\operatorname{id}}
\newcommand{\isom}{\simeq}
\newcommand{\NP}{\mathsf{NP}}
\newcommand{\Qbar}{\overline{\mathbf{Q}}}
\newcommand{\Zbar}{\overline{\mathbf{Z}}}
\begin{document}

\title[Parity and symmetry of polarized endomorphisms]{Parity and symmetry of polarized endomorphisms on cohomology}

\author{Fei Hu}
\address{School of Mathematics, Nanjing University, Nanjing 210093, Jiangsu, China}
\email{\href{mailto:fhu@nju.edu.cn}{\tt fhu@nju.edu.cn}}


\begin{abstract}
We show that the eigenvalues of any polarized endomorphism acting on the $\ell$-adic \'etale cohomology of a smooth projective variety satisfy certain parity and symmetry properties, as predicted by the standard conjectures.
These properties were previously known for Frobenius endomorphisms.
Besides the hard Lefschetz theorem, a key new ingredient is a recent Weil's Riemann hypothesis-type result due to J.~Xie.
We also prove a "Newton over Hodge" type property for abelian varieties and Grassmannians.
\end{abstract}

\subjclass[2020]{
14A10, 
14G17, 
14F20, 
14F40, 
15A63. 
}

\keywords{Frobenius endomorphism, polarized endomorphism, \'etale cohomology, de Rham cohomology, alternating form, symmetric form, parity, Newton polygon, Hodge polygon}


\maketitle

\section{Introduction}

We work over an algebraically closed field $\bk$ of characteristic $p\geq 0$.
Let $\ell$ be a prime number distinct from $p$.
Let $q\in \bZ_{>1}$ be an integer greater than $1$, which is not necessarily a power of $p$ and may have distinct prime factors.
An algebraic integer $\lambda\in \Zbar$ is called a \textit{$q$-Weil integer of weight $w$}, if for every field embedding $\iota\colon \bQ(\lambda) \to \bC$, we have
\[
|\iota(\lambda)|^2 = \iota(\lambda) \cdot \overline{\iota(\lambda)} = q^w.
\]
Motivated, presumably, by the K\"ahler analog of Weil's Riemann hypothesis established by Serre \cite{Serre60}, Tate proposed the following lesser-known conjecture in his 1963 Purdue talk (see also his 1964 Woods Hole talk \cite{Tate-WoodsHole}), which has been somewhat obscured by the prominence of the Tate conjecture.

\begin{conjecture}[{cf.~\cite[\S3, Conjecture (d)]{Tate65}}]
\label{conj:Tate-d}
Let $X$ be a smooth projective variety of dimension $d$ over $\bk$, and let $f$ be a ($q$-)polarized endomorphism of $X$, i.e., there is an ample divisor $H_X$ on $X$ such that $f^*H_X \sim q H_X$ for some integer $q>1$.
Then, for each $0 \leq i \leq 2d$, the following assertions hold:
\begin{enumerate}[label=\emph{(\arabic*)}, ref=(\arabic*)]
\item \label{conj:Tate-WRH} All eigenvalues of $f^*|_{H^i_{\emph{\et}}(X, \bQ_\ell)}$ are $q$-Weil integers of weight $i$.
\item \label{conj:Tate-SS} The linear transformation $f^*|_{H^i_{\emph{\et}}(X, \bQ_\ell)}$ is semisimple.
\end{enumerate}
\end{conjecture}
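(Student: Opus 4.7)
The plan is to establish the two assertions in turn, with (1) accessible through Xie's theorem and (2) identifiable as the principal obstacle.

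For (1), I would first apply the hard Lefschetz theorem to obtain the primitive decomposition
\[
H^i_{\et}(X,\bQ_\ell) \;=\; \bigoplus_{j} L^{\,j}\, P^{i-2j}(X,\bQ_\ell),
\]
where $L$ denotes cup product with $c_1(H_X)$ and $P^{k}$ is the primitive part. The polarized condition $f^*H_X \sim q H_X$ gives, at the cohomological level, $f^* L = q\, L\, f^*$, so $f^*$ preserves this decomposition and acts on the summand $L^{j} P^{i-2j}$ as $q^{j}$ times its restriction to $P^{i-2j}$; hence (1) reduces to the case of primitive cohomology. On $P^i$ (for $i \le d$), form the polarization pairing
\[
Q_i(\alpha,\beta) \;:=\; \int_X c_1(H_X)^{d-i} \cup \alpha \cup \beta,
\]
which is nondegenerate. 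Since $\deg f = q^{d}$, the projection formula yields $Q_i(f^*\alpha, f^*\beta) = q^{\,i}\, Q_i(\alpha,\beta)$, so $q^{-i/2} f^*$ is an isometry of $Q_i$, pairing generalized eigenvalues as $\lambda \leftrightarrow q^{\,i}/\lambda$. To upgrade this symmetry to the Weil bound $|\iota(\lambda)|^2 = q^{\,i}$ for every complex embedding $\iota\colon \bQ(\lambda) \to \bC$, I would invoke the recent Weil's Riemann hypothesis-type theorem of J.~Xie flagged in the abstract, which supplies precisely this absolute-value input for polarized endomorphisms. Integrality of the eigenvalues would then follow provided the characteristic polynomial of $f^*$ on $H^i_\et$ has $\bZ$-coefficients independent of $\ell$, a Katz--Messing-type statement one expects to extend here.

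For (2), the starting point is again the isometry $q^{-i/2} f^*$ of $Q_i$ on primitive cohomology. Over $\bC$, the Hodge--Riemann bilinear relations refine $Q_i$ to a positive definite Hermitian form on $P^i_{\bC}$ invariant under this isometry, forcing diagonalizability; transfer along the Artin comparison then settles the characteristic-zero case. In positive characteristic, no such intrinsic positivity is available for $\ell$-adic cohomology, which is essentially the content of the Hodge-type standard conjecture and remains open. A natural fallback is to spread $(X, H_X, f)$ out over a finitely generated ring, specialize, and compare with a Frobenius action where Deligne's semisimplicity theorem applies; but the polarized pair $(X, f)$ need not specialize compatibly with any Frobenius, so this route does not obviously succeed. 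I therefore expect (2) to be the principal obstacle: even granted Xie's result and the Lefschetz--polarization setup above, unconditional semisimplicity seems to require either a positive characteristic Hodge--Riemann input or some genuinely new ingredient specific to polarized endomorphisms.
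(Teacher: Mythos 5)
The statement here is a conjecture, and the paper does not prove it; it records (\cref{thm:Xie}, citing Xie) that part~(1) is now a theorem, and states explicitly in \cref{rmk:integrality}(1) that part~(2), the semisimplicity conjecture, remains open. Your assessment matches this: you identify Xie's result together with Katz--Messing integrality as settling~(1), and you correctly diagnose~(2) as blocked by the absence of an $\ell$-adic Hodge--Riemann positivity statement in positive characteristic (a piece of the standard conjectures), with the spreading-out fallback failing because the polarized pair $(X,f)$ need not specialize compatibly to a Frobenius. Your Lefschetz--polarization setup --- the pairing $Q_i$ and the normalized isometry $q^{-i/2}f^*$ --- is the same machinery the paper deploys in \S 2 to prove \cref{thm:parity-symmetry}, though the paper works directly with the nondegenerate form $B$ on all of $H^i_{\et}$ (nondegeneracy coming from Poincar\'e duality plus hard Lefschetz) rather than first passing to the primitive decomposition.

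One small caveat on your discussion of~(1): the primitive decomposition and the isometry $q^{-i/2}f^*$ establish only the multiplicative symmetry $\lambda \leftrightarrow q^i/\lambda$ of the eigenvalues, not the archimedean bound $|\iota(\lambda)|^2 = q^i$; that bound is entirely Xie's theorem, invoked as a black box. So the Lefschetz apparatus you build is not actually load-bearing for~(1) --- it becomes relevant only for the parity and symmetry refinements of \cref{thm:parity-symmetry}. Also note that Katz--Messing plus a spreading-out/specialization argument (as in \cref{rmk:integrality}(2)) is needed to get integrality and $\ell$-independence here, since $X$ lives over an arbitrary algebraically closed field rather than a finite field; your parenthetical ``one expects to extend here'' understates that this extension is already part of the standard package and is how the paper phrases it.
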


\begin{remark}
\label{rmk:integrality}
\begin{enumerate}[itemindent=2.5em, leftmargin=0pt, rightmargin=0cm]
\item As Tate observed, in characteristic zero, \cref{conj:Tate-d} follows from Hodge theory by Serre's pioneering work \cite{Serre60}.
When $\bk = \ol\bF_q$ and $f$ is the geometric Frobenius endomorphism $\Frob_q$ of $X$, part \ref{conj:Tate-WRH} of \cref{conj:Tate-d} corresponds to Weil's Riemann hypothesis, which was proven by Deligne \cite{Deligne74}.
However, its part \ref{conj:Tate-SS}, known as the semisimplicity conjecture, remains open in general.
It is well known that \cref{conj:Tate-d} follows from the standard conjectures (see, e.g., \cite[\S 4]{Kleiman68}).

\item Building on Deligne's groundbreaking work \cite{Deligne74}, Katz and Messing \cite{KM74} proved that the standard conjecture $C$ holds for all smooth projective varieties $Y$ over finite fields.
As a consequence, the characteristic polynomial of any endomorphism $g$ of $Y$ acting on any Weil cohomology group $H^*(Y)$ has integer coefficients and is independent of the choice of Weil cohomology.
Then, by applying a standard spreading-out and specialization argument, it follows that all eigenvalues appearing in \cref{conj:Tate-d}\ref{conj:Tate-WRH} are algebraic integers.
\end{enumerate}
\end{remark}

In a recent preprint \cite{Xie-GWRH}, Xie has made substantial progress on \cref{conj:Tate-d}, fully proving its longstanding open part \ref{conj:Tate-WRH}, which had remained unresolved for 60 years.
His proof relies on Deligne's resolution of Weil's Riemann hypothesis.
In particular, his result below simultaneously extends \cite[Theorem~1.6]{Deligne74} on the eigenvalues of the Frobenius endomorphisms to arbitrary polarized endomorphisms and \cite[Th\'eor\`eme~1]{Serre60} to arbitrary characteristic, as anticipated by the standard conjectures.

\begin{theorem}[{cf. \cite[Corollary~1.5]{Xie-GWRH}}]
\label{thm:Xie}
\cref{conj:Tate-d}\ref{conj:Tate-WRH} is true.
\end{theorem}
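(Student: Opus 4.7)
My plan is to reduce, via spreading out and specialization, to showing that every root (in $\bC$, under a fixed embedding $\ol{\bQ} \hookrightarrow \bC$) of the characteristic polynomial $P_i(t) \in \bZ[t]$ of $f^*$ on $H^i_{\et}(X, \bQ_\ell)$ has absolute value $q^{i/2}$. Since the complex roots of $P_i$ form a union of complete Galois orbits, this is equivalent to every eigenvalue of $f^*$ being a $q$-Weil integer of weight $i$. I would then split into two cases according to $\Char \bk$.

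I would first spread $(X, H_X, f)$ to $(\mathcal X, \mathcal H, \phi)$ over a finitely generated subring $R \subset \bk$ with $\phi^*\mathcal H \sim q\mathcal H$. Smooth and proper base change makes $P_i$ constant on $\Spec R$, and it lies in $\bZ[t]$ by Katz--Messing (see \cref{rmk:integrality}). In characteristic zero I would pick any ring embedding $R \hookrightarrow \bC$, obtaining a polarized endomorphism $f_\bC$ of the K\"ahler manifold $X_\bC^{\mathrm{an}}$; Serre's theorem \cite{Serre60} then gives $|\mu| = q^{i/2}$ for every eigenvalue $\mu$ of $f_\bC^*$ on $H^i(X_\bC^{\mathrm{an}}, \bC)$, and the Artin comparison isomorphism identifies these with the complex roots of $P_i$.

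In positive characteristic, lifting to $\bC$ is unavailable, so I would specialize $R$ at a closed point with residue field $\bF_{q_0}$, chosen large enough that $f$, $X$, $H_X$ are all $\bF_{q_0}$-rational, so that $f^*$ commutes with $\Frob_{q_0}^*$ on $H^i$. Two structural inputs then conspire. Hard Lefschetz, together with the relation $f^*L = qLf^*$ (where $L = {\cup}\,H_X$) derived from $f^*H_X \sim qH_X$, preserves the primitive decomposition $H^i = \bigoplus_j L^j P^{i-2j}$ and endows each $P^i$ with a non-degenerate bilinear form $Q(v,w) = \int_X v \cup L^{d-i} w$ satisfying $Q(f^*v, f^*w) = q^i Q(v,w)$ (using $\deg f = q^d$); this yields the spectral symmetry $\lambda \leftrightarrow q^i/\lambda$ on the eigenvalues of $f^*|_{H^i}$. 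Simultaneously, Deligne's theorem \cite{Deligne74} forces every eigenvalue of $\Frob_{q_0}^*$ on $H^i$ to have absolute value $q_0^{i/2}$ under every complex embedding.

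The main obstacle will be to close the gap between the symmetry $\lambda \leftrightarrow q^i/\lambda$ and the desired equality $|\lambda|^2 = q^i$ by producing the upper bound $|\lambda| \le q^{i/2}$. My approach here would be to exploit the commutation $[f^*, \Frob_{q_0}^*] = 0$: decompose $H^i$ into joint generalized eigenspaces for $\Frob_{q_0}^*$, use Deligne's theorem to impose purity of weight $i$ on each, and then control the action of $f^*$ on each piece by combining the polarization pairing of the previous paragraph with Frobenius purity. Effecting this transfer of purity from the Frobenius to an arbitrary polarized endomorphism---one which need not itself be of Frobenius type---is the delicate heart of Xie's argument \cite{Xie-GWRH}, and is precisely where hard Lefschetz, the polarization form, and Deligne's theorem are most tightly interleaved.
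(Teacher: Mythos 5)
The paper does not actually prove \cref{thm:Xie}: it is an external citation to Xie's preprint \cite{Xie-GWRH} and is used downstream as a black box, so there is no in-paper proof to compare your sketch against.

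That said, your sketch has a genuine gap exactly where you flag it, and the flag does not dissolve the gap. The polarization form on primitive cohomology yields only the multiplicative symmetry $\lambda \leftrightarrow q^{i}/\lambda$ of the $f^{*}$-spectrum, which is compatible with an eigenvalue of modulus $q^{i/2}r$ being paired with one of modulus $q^{i/2}/r$ for arbitrary $r>1$; it bounds no individual eigenvalue. Your plan to close this via joint generalized eigenspaces of $f^{*}$ and $\Frob_{q_0}^{*}$ does not go through as stated: although the operators commute, Deligne's purity constrains only the Frobenius eigenvalue on each joint block, and the $f^{*}$-spectrum on such a block is a priori unconstrained. The pairing $Q$ matches the block $H_{\mu}$ with $H_{q_0^{i}/\mu}=H_{\bar\mu}$ and so merely reproduces the $\lambda \leftrightarrow q^{i}/\lambda$ symmetry once more; it does not manufacture the missing upper bound $|\lambda|\le q^{i/2}$. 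Moreover your description of Xie's mechanism is not accurate: as the title of \cite{Xie-GWRH} indicates, the argument does not transfer Frobenius purity to $f^{*}$ block by block, but rather shows that the \emph{numerical} spectral data of $f^{*}$ on cycle class groups (dynamical degrees, all equal to powers of $q$ for a polarized endomorphism, by positivity) control the \emph{cohomological} spectrum, with Deligne's theorem entering through the Katz--Messing integrality and specialization. Your characteristic-zero branch via Serre \cite{Serre60} is correct, and your reduction steps (spreading out, Katz--Messing, symmetry from the polarization) match ingredients used in the literature, but the proposal does not contain the idea that actually produces the bound in positive characteristic.
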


From a motivic perspective, polarized endomorphisms exhibit behavior analogous to that of Frobenius endomorphisms.
For instance, the standard conjectures imply both Weil’s Riemann hypothesis and the semisimplicity of not only Frobenius endomorphisms but also all polarized endomorphisms (see \cite[\S 4]{Kleiman68}).
In this short note, we further supplement that certain parity and symmetry properties of Frobenius endomorphisms also extend to polarized endomorphisms.
For details on these properties, see \cite{Suh12, SZ16}.

Before stating the result, we introduce some notation.
Let $X$ be a smooth projective variety of dimension $d$ over $\bk$.
Assume that $f$ is an arbitrary endomorphism of $X$.
For any $0\leq i\leq 2d$, denote the algebraic multiplicity of an eigenvalue $\lambda$ of $f^*|_{H^i_{\et}(X, \bQ_\ell)}$ by $\mu_\lambda\in \bZ_{\geq 0}$.
For each $e\geq 1$, let $\mu_{\lambda, e}\in \bZ_{\geq 0}$ denote the number of $e\times e$ Jordan blocks associated with the eigenvalue $\lambda$ in the Jordan canonical form of $f^*|_{H^i_{\et}(X, \bQ_\ell)}$.
Denote the characteristic polynomial of $f^*|_{H^i_{\et}(X, \bQ_\ell)}$ by
\[
P_i(t) \coloneqq \det (t \id - f^*|_{H^i_{\et}(X, \bQ_\ell)}) = \prod_{\lambda} \, (t-\lambda)^{\mu_\lambda} \in \bZ[t].
\]
Note that $\mu_\lambda = \sum_{e\geq 1} e \mu_{\lambda, e}$, and the geometric multiplicity of $\lambda$ equals $\sum_{e\geq 1} \mu_{\lambda, e}$.

\begin{theorem}
\label{thm:parity-symmetry}
Let $X$ be a smooth projective variety of dimension $d$ over $\bk$, and $f$ a $q$-polarized endomorphism of $X$, i.e., there is an ample divisor $H_X$ on $X$ such that $f^*H_X \sim q H_X$ for some $q\in \bZ_{>1}$.
Let $\ell$ be a prime distinct from $p$.
Then the following assertions hold:
\begin{enumerate}[label=\emph{(\arabic*)}, ref=(\arabic*)]
\item \label{thm:multiplicity} For any $0\leq i\leq 2d$, any eigenvalue $\lambda$ of $f^*|_{H_{\emph{\et}}^i(X, \bQ_\ell)}$, and any $e\geq 1$, we have $\mu_{\lambda, e} = \mu_{q^i/\lambda, e}$;
in particular, $\mu_{\lambda} = \mu_{q^i/\lambda}$.

\item \label{thm:functional-eq} For any $0\leq i\leq 2d$, the characteristic polynomial $P_i(t)$ satisfies the functional equation
\[
t^{b_i} P_i(q^i/t) = (-1)^{\epsilon_i} q^{\frac{ib_{i}}{2}} P_i(t),
\]
where $b_i \coloneqq b_i(X)\coloneqq \dim_{\bQ_\ell} \! H^i_{\emph{\et}}(X, \bQ_\ell)$ is the $i$th Betti number of $X$ and $\epsilon_i\in\{0,1\}$.

\item \label{thm:evenness} For any odd $0<i<2d$, the real roots $\pm q^{i/2}$ of $P_i(t)$ have even multiplicity, i.e., $\mu_{\pm q^{i/2}}$ are even (possibly zero).
In particular, for odd $i$, the $\epsilon_i$ in the functional equation is zero.
\end{enumerate}
\end{theorem}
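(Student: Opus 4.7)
The plan is to organize the argument around three classical ingredients from $\ell$-adic cohomology of smooth projective varieties: the hard Lefschetz theorem (due to Deligne in positive characteristic), Poincar\'e duality together with the projection formula, and the resulting Lefschetz pairing in odd degree. Part \ref{thm:multiplicity} will follow from the first two; Part \ref{thm:functional-eq} is a formal algebraic consequence of Part \ref{thm:multiplicity}; and Part \ref{thm:evenness} needs the additional symplectic-similitude structure. Throughout, let $\phi_j \coloneqq f^*|_{H^j_{\et}(X,\bQ_\ell)}$ and let $L$ denote cup product with $c_1(H_X)$.

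For Part \ref{thm:multiplicity}, the relation $f^*H_X \sim qH_X$ yields the intertwining $\phi_{j+2}\circ L = qL\circ\phi_j$, hence by iteration $\phi_{2d-i}\circ L^{d-i} = q^{d-i}\,L^{d-i}\circ\phi_i$ for $i\leq d$. Since $L^{d-i}$ is an isomorphism by hard Lefschetz, the operators $\phi_{2d-i}$ and $q^{d-i}\phi_i$ are conjugate and share the same Jordan type (up to multiplying eigenvalues by $q^{d-i}$). Next, $(f^*H_X)^d = q^d H_X^d$ forces $\deg(f)=q^d$, and the projection formula gives $f_*\circ f^* = q^d\cdot\id$ on each $H^j$; in particular $f_*|_{H^i} = q^d\,\phi_i^{-1}$, and since inversion sends a Jordan block of size $e$ and eigenvalue $\lambda$ to one similar to size $e$ and eigenvalue $\lambda^{-1}$, this identifies Jordan types up to an explicit transformation. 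Finally, Poincar\'e duality identifies $f_*|_{H^i}$ with the transpose of $\phi_{2d-i}$ and hence preserves Jordan type. Chaining these three identifications produces the symmetry $\mu_{\lambda,e}=\mu_{q^i/\lambda,e}$ on $H^i$ for $i\leq d$; the case $i>d$ follows by swapping roles.

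Part \ref{thm:functional-eq} is then immediate. Substituting $q^i/t$ for $t$ in $P_i(t)=\prod_\lambda(t-\lambda)^{\mu_\lambda}$ and using $\mu_\lambda=\mu_{q^i/\lambda}$ from Part \ref{thm:multiplicity} to match the transformed factors with $P_i(t)$, one gets $t^{b_i}P_i(q^i/t) = (-1)^{b_i}\det(\phi_i)\,P_i(t)$. Grouping the eigenvalues into pairs $\{\lambda,q^i/\lambda\}$ with $\lambda^2\neq q^i$ (each pair contributing $q^{i\mu_\lambda}$) and the self-paired singletons $\pm q^{i/2}$ yields $\det(\phi_i)=(-1)^{\mu_{-q^{i/2}}}q^{ib_i/2}$; one may therefore take $\epsilon_i\equiv b_i+\mu_{-q^{i/2}}\pmod{2}$.

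For Part \ref{thm:evenness}, with $i$ odd define the Lefschetz pairing $\langle\alpha,\beta\rangle\coloneqq\int_X\alpha\cup L^{d-i}\beta$ on $H^i_{\et}(X,\bQ_\ell)$. Hard Lefschetz combined with Poincar\'e duality makes it nondegenerate, and graded commutativity (noting that cup product with $c_1(H_X)\in H^2$ commutes with cup product) makes it skew-symmetric. Using $\phi_{2d-i}\circ L^{d-i}=q^{d-i}L^{d-i}\circ\phi_i$ together with $\phi_{2d}=q^d\cdot\id$ on $H^{2d}_{\et}$ (from $\deg f=q^d$), one computes $\langle\phi_i\alpha,\phi_i\beta\rangle=q^i\langle\alpha,\beta\rangle$, so $\phi_i$ is a symplectic similitude on $H^i$ with multiplier $q^i$. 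Invoking the standard linear-algebra fact that the generalized eigenspaces of such a similitude at self-paired eigenvalues $\lambda=\pm q^{i/2}$ inherit a nondegenerate alternating form, and are therefore even-dimensional, yields $\mu_{\pm q^{i/2}}\in 2\bZ$. The main obstacle in this plan lies in Part \ref{thm:multiplicity}: promoting the spectral symmetry to a Jordan-block-level symmetry requires that each of the three identifications genuinely preserve Jordan type, which is why the argument is phrased through conjugations, inversions, and transposes rather than through characteristic polynomials alone.
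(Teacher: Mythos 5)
Your proposal is correct, and parts \ref{thm:multiplicity} and \ref{thm:functional-eq} follow essentially the paper's route: the paper packages your three Jordan-type-preserving identifications (hard Lefschetz intertwining, $f_*f^*=q^d\id$, and Poincar\'e-duality transposition) into the single statement that the normalized map $\varphi_i=q^{-i/2}f^*|_{H^i}$ preserves the nondegenerate form $B(\alpha,\beta)=\alpha\cup\beta\cup\cl_X(H_X)^{d-i}$, from which $\varphi_i^{\mathsf T}\sim\varphi_i^{-1}$ and hence the Jordan-level symmetry. Unfolding that lemma into a chain of conjugations, inversions, and transposes, as you do, is the same argument. Your explicit formula $\epsilon_i\equiv b_i+\mu_{-q^{i/2}}\pmod 2$ also agrees with what the paper records in its Remark following the proof.

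The genuine divergence is in part \ref{thm:evenness}. The paper deduces evenness of $\mu_{\pm q^{i/2}}$ by combining $\det\varphi_i=1$ with Xie's theorem (quoted as \cref{thm:Xie}), which places all eigenvalues of $\varphi_i$ on the unit circle so that the non-real ones pair off by complex conjugation; it then reads off the parities of $\mu_{\pm 1}$. You instead invoke the purely linear-algebraic fact that for a symplectic similitude $\phi_i$ with multiplier $q^i$ on $(H^i,\omega)$, the generalized eigenspaces $V_\lambda$ and $V_\mu$ are $\omega$-orthogonal unless $\lambda\mu=q^i$, so nondegeneracy of $\omega$ forces the restriction of the alternating form to the self-paired blocks $V_{\pm q^{i/2}}$ (over $\bar\bQ_\ell$) to be nondegenerate, whence these blocks are even-dimensional. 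This is a more structural and more elementary argument: it makes part \ref{thm:evenness} independent of Xie's theorem (the paper still needs Xie for the statement quoted as \cref{thm:Xie}, but your route does not use it in the proof of \ref{thm:evenness}), and in fact it yields evenness directly at the generalized-eigenspace level rather than via a determinant-and-counting argument. One small bookkeeping point: your Lefschetz pairing $\langle\alpha,\beta\rangle=\int_X\alpha\cup L^{d-i}\beta$ is only defined for $i\le d$, so the odd degrees $d<i<2d$ should be handled, as the paper does at the outset, by the Poincar\'e-duality reduction to $i\le d$; you set up part \ref{thm:multiplicity} to cover both ranges but should say explicitly that the same reduction applies in part \ref{thm:evenness}.
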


\begin{remark}
\label{rmk:parity-symmetry}
\begin{enumerate}[itemindent=2.5em, leftmargin=0pt, rightmargin=0cm]
\item The above assertion~\ref{thm:evenness} provides a partial answer to a question posed by the author in 2019, which, as Deligne kindly pointed out, also follows from the standard conjectures (see \cite[Remark~1.3]{Hu24}).
When $\bk = \ol\bF_q$ and $f$ is the geometric Frobenius endomorphism $\Frob_q$ of $X$, \cref{thm:parity-symmetry} is a well-known consequence of Deligne's Weil II \cite{Deligne80}.
Moreover, these parity and symmetry properties hold even for the Frobenius endomorphism of \textit{proper} varieties.
For further details, see \cite{Suh12,SZ16}.

\item It is worth mentioning that by Poincaré duality one can easily derive another functional equation for characteristic polynomials $P_{i}$ and $P_{2d-i}$ as follows:
\[
t^{b_i} P_i(q^d/t) = (-1)^{\epsilon_i} q^{\frac{ib_{i}}{2}} P_{2d-i}(t),
\]
where the sign is exactly the same as in the assertion~\ref{thm:functional-eq}.
\end{enumerate}
\end{remark}

As a direct consequence of \cref{thm:parity-symmetry}, we describe the nonarchimedean valuations of the eigenvalues of polarized endomorphisms acting on cohomology, or equivalently, the Newton polygons of the corresponding characteristic polynomials.

Recall that for a polynomial
\[
P(t) = a_0 t^n + a_{1} t^{n-1} + \dots + a_{n} \in K[t]
\]
with coefficients $a_i$ in a nonarchimedean field $(K, \nu)$, the \textit{Newton polygon $\NP_{\nu}(P)$} of $P(t)$ is defined as the lower boundary of the convex hull of the set of points $(i, \nu(a_i))$ with $a_i\neq 0$ in $\bR^2$.
This polygon fully encodes the $\nu$-valuations of the roots of $P(t)$.
Indeed, if the Newton polygon $\NP_{\nu}(P)$ has a segment of slope $\lambda$ and (horizontal) length $l$, then $P(t)$ has precisely $l$ roots (counted with multiplicity) with $\nu$-valuation $\lambda$.

\begin{corollary}
\label{cor:non-arch-NP}
Let $X$ be a smooth projective variety of dimension $d$ over $\bk$, and $f$ a $q$-polarized endomorphism of $X$ with $\Char \bk = p \geq 0$ and $q\in \bZ_{>1}$.
Let $\ell$ be a prime distinct from $p$.
Denote by $\nu_p$ (resp. $\nu_\ell$) a $p$-adic (resp. $\ell$-adic) valuation on $\Qbar$.
For any $0\leq i\leq 2d$, denote the characteristic polynomial of $f^*|_{H^i_{\emph{\et}}(X, \bQ_\ell)}$ by $P_i(t)\in \bZ[t]$.
Then the following assertions hold:
\begin{enumerate}[label=\emph{(\arabic*)}, ref=(\arabic*)]
\item \label{cor:ell-nmid-q} If $\ell \nmid q$, then the Newton polygon $\NP_{\nu_\ell}(P_i)$ has a single segment of slope $0$ and length $b_i$, i.e., all eigenvalues of $f^*|_{H_{\emph{\et}}^i(X, \bQ_\ell)}$ are $\ell$-adic units.

\item \label{cor:p-nmid-q} If $p>0$ and $p \nmid q$, then $\NP_{\nu_p}(P_i)$ has a single segment of slope $0$ and length $b_i$.

\item \label{cor:ell-mid-q} If $\ell \! \mid \! q$ and $\nu_\ell$ is normalized so that $\nu_\ell(q)=1$, then $\NP_{\nu_\ell}(P_i)$ is symmetric, i.e., its slopes are rational numbers in $[0,i]$, and each slope $\alpha$ appears with the same length as $i-\alpha$.

\item \label{cor:p-mid-q} If $p>0$, $p \! \mid \! q$, and $\nu_p$ is normalized so that $\nu_p(q)=1$, then $\NP_{\nu_p}(P_i)$ is symmetric.
\end{enumerate}
\end{corollary}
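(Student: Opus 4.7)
The plan is to deduce all four assertions simultaneously by combining two ingredients already at hand: \cref{thm:Xie}, which guarantees that every eigenvalue $\lambda$ of $f^{*}|_{H^{i}_{\et}(X,\bQ_\ell)}$ is a $q$-Weil integer of weight $i$ and, in particular, an algebraic integer; and the multiplicity identity $\mu_{\lambda}=\mu_{q^{i}/\lambda}$ from \cref{thm:parity-symmetry}\ref{thm:multiplicity}, which says that $\lambda\mapsto q^{i}/\lambda$ is an involution on the multiset of eigenvalues (each nonzero eigenvalue comes paired with its companion $q^{i}/\lambda$, preserving algebraic multiplicities). Together these give, for any non-archimedean valuation $\nu$ on $\Qbar$, the inequality $\nu(\lambda)\geq 0$ (since $\lambda$ is an algebraic integer), and, applied to the companion root $q^{i}/\lambda$, the inequality $\nu(q^{i}/\lambda)=i\,\nu(q)-\nu(\lambda)\geq 0$. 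Hence
\[
0 \leq \nu(\lambda) \leq i\,\nu(q) \qquad \text{for every eigenvalue } \lambda.
\]

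For \ref{cor:ell-nmid-q} I would take $\nu=\nu_\ell$; since $\ell\nmid q$ forces $\nu(q)=0$, the displayed bound pins every $\nu(\lambda)$ to $0$, so $\NP_{\nu_\ell}(P_i)$ is a single horizontal segment of length $b_i$. The argument for \ref{cor:p-nmid-q} is identical with $\nu_p$ in place of $\nu_\ell$.

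For \ref{cor:ell-mid-q} and \ref{cor:p-mid-q} I would normalize $\nu$ so that $\nu(q)=1$; the bound becomes $0\leq \nu(\lambda)\leq i$, placing all slopes of $\NP_\nu(P_i)$ in $[0,i]$. The involution $\lambda\mapsto q^{i}/\lambda$ on eigenvalues then induces the reflection $\alpha\mapsto i-\alpha$ on the multiset of valuations, preserving multiplicities, which is precisely the required symmetry of slopes.

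Essentially all of the real content is packaged into \cref{thm:parity-symmetry}\ref{thm:multiplicity} together with \cref{thm:Xie}; once both are in hand, the corollary is a formal consequence, and I do not anticipate any genuine obstacle in the argument itself.
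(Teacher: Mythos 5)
Your proof is correct, and it takes a mildly different but cleaner route than the paper. The paper's own proof splits into cases: for assertions (1)--(2) it combines Katz--Messing integrality with Poincar\'e duality (so that $q^d/\lambda$, as an eigenvalue on $H^{2d-i}_{\et}$, is also an algebraic integer), while for assertions (3)--(4) it invokes \cref{thm:Xie} directly to write $\bar\lambda = q^i/\lambda$ and then uses rationality of $P_i$ to transfer multiplicities. You instead funnel everything through the single multiplicity identity $\mu_\lambda = \mu_{q^i/\lambda}$ of \cref{thm:parity-symmetry}\ref{thm:multiplicity}, which immediately places both $\lambda$ and $q^i/\lambda$ among the (nonzero, algebraically integral) eigenvalues on $H^i_{\et}$ and yields the two-sided bound $0 \le \nu(\lambda) \le i\,\nu(q)$ uniformly. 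This is a genuine simplification: it treats all four parts by one mechanism, avoids passing to $H^{2d-i}_{\et}$, gives the sharper bound $i\,\nu(q)$ rather than $d\,\nu(q)$, and for the symmetric cases (3)--(4) does not need to re-derive the matching of multiplicities from Xie's theorem and Galois conjugation---that matching is exactly what \cref{thm:parity-symmetry}\ref{thm:multiplicity} already records. One small attribution point: integrality of the eigenvalues is due to Katz--Messing (\cref{rmk:integrality}) rather than to \cref{thm:Xie} per se, though the latter subsumes it through the definition of $q$-Weil integer; also, the involution $\lambda \mapsto q^i/\lambda$ is well defined because $f^*$ is invertible on cohomology ($f$ being finite of degree $q^d$), a fact worth flagging even if standard.
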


A sophisticated reader may wonder what lies beyond the symmetry in \cref{cor:non-arch-NP}\ref{cor:ell-mid-q} and \ref{cor:p-mid-q}.
Indeed, Katz conjectured, first proven by Mazur \cite{Mazur72,Mazur73} and later extended by Ogus \cite[\S8]{BO78}, that the Newton polygon of the characteristic polynomial of the geometric Frobenius $\Frob_q$ on $H^i_{\et}(X, \bQ_\ell)$ lies on or above the Hodge polygon $\HP_i(X)$ of $X$ of weight $i$.

Recall that for a smooth projective variety $X$ of dimension $d$ over $\bk$, and an integer $0\leq i\leq 2d$, the \textit{Hodge polygon} $\HP_i(X)$ of $X$ of weight $i$ is a piecewise linear (convex) function passing through the origin $(0,0)$ and the points
\[
\Bigg(\sum_{j=0}^k h^{j,i-j}, \sum_{j=0}^k j h^{j,i-j}\Bigg), \quad k=0,1,\dots,i,
\]
where $h^{j,i-j}$ are the Hodge numbers, defined as
\[
h^{j,i-j} \coloneqq h^{j,i-j}(X) \coloneqq \dim_{\bk} H^{i-j}(X, \Omega_{X/\bk}^j).
\]
Equivalently, $\HP_i(X)$ has slope zero over the interval $(0, h^{0,i})$ and slope $k=1,2,\dots,i$ over the interval 
\[
(h^{0,i}+\dots+h^{k-1,i-k+1}, h^{0,i}+\dots+h^{k,i-k}).
\]
As an analog of Katz's conjecture or the Mazur--Ogus theorem (see \cite[Theorem~8.39]{BO78}), it is natural to ask the following question.

\begin{question}
\label{qn:NP vs HP}
Let $X$ be a smooth projective variety of dimension $d$ over $\bk$, and $f$ a $q$-polarized endomorphism of $X$ with $\Char \bk = p \geq 0$ and $q\in \bZ_{>1}$.
Let $\ell$ be a prime distinct from $p$.
Assume that $\ell \! \mid \! q$ (resp. $p>0$, $p \! \mid \! q$), and let $\nu_\ell$ (resp. $\nu_p$) denote the normalized $\ell$-adic (resp. $p$-adic) valuation on $\Qbar$ so that $\nu_\ell(q)=1$ (resp. $\nu_p(q)=1$).
Then, for any $0\leq i\leq 2d$, is it true that
\[
\NP_{\nu_\ell}(P_i) \geq \HP_i(X) \ \ \text{(resp. } \NP_{\nu_p}(P_i) \geq \HP_i(X) \text{)}?
\]
\end{question}

\begin{remark}
\begin{enumerate}[itemindent=2.5em, leftmargin=0pt, rightmargin=0cm]
\item If $\bk = \ol\bF_q$ and $f$ is the geometric Frobenius endomorphism $\Frob_q$ of $X$ (in particular, $q$ is a power of some prime $p$), then the inequality $\NP_{\nu_p}(P_i) \geq \HP_i(X)$ is precisely the Mazur--Ogus theorem, whose proof relies on crystalline cohomology.
Hence, for a general polarized endomorphism $f$, although its characteristic polynomial $P_i(t)$ is independent of the chosen Weil cohomology theory, it may still be useful to study the induced action on crystalline cohomology.

\item The above \cref{qn:NP vs HP} remains compelling even in characteristic zero, as it imposes conjectural numerical constraints on polarized endomorphisms, whose classification in characteristic zero is still largely open (see, e.g., \cite{MZ18a}).

\item By the Lefschetz trace formula, an affirmative answer to \cref{qn:NP vs HP} would also yield an analogous divisibility statement for the number of periodic points of $f$, counted with multiplicity (cf. \cite[Divisibility Corollary]{Mazur72}).
\end{enumerate}
\end{remark}

\begin{proposition}
\label{prop:AV&Gr}
\cref{qn:NP vs HP} admits an affirmative answer in the cases of abelian varieties and Grassmannians.
\end{proposition}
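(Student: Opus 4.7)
The plan is to handle the two classes of varieties separately. For an abelian variety $A$ of dimension $g$, I would first reduce the statement for $H^i_{\et}(A, \bQ_\ell)$ to the case $i = 1$ via the canonical $f^*$-equivariant isomorphism $H^i_{\et}(A, \bQ_\ell) \cong \bigwedge^i H^1_{\et}(A, \bQ_\ell)$, together with the Hodge decomposition $h^{p,q}(A) = \binom{g}{p}\binom{g}{q}$ for $p + q = i$. Consequently, the slopes of $\NP_\nu(P_i)$ and of $\HP_i(A)$ are each obtained as the multiset $e_i(\,\cdot\,)$ of $i$-element subset sums of the corresponding slopes for $H^1$.

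For the base case $i = 1$, the Hodge polygon $\HP_1(A)$ has slope $0$ of length $g$ and slope $1$ of length $g$. Writing $\sigma_1 \leq \dots \leq \sigma_{2g}$ for the $\nu$-valuations of the eigenvalues of $f^*|_{H^1_{\et}}$ (normalized so $\nu(q) = 1$), \cref{thm:Xie} ensures each eigenvalue is an algebraic $q$-Weil integer of weight $1$, forcing $\sigma_i \geq 0$; by the symmetry in \cref{thm:parity-symmetry}\ref{thm:multiplicity}, the multiset $\{\sigma_i\}$ is invariant under $\sigma \mapsto 1 - \sigma$, so combined with nonnegativity each $\sigma_i$ lies in $[0,1]$ and $\sum_i \sigma_i = g$. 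A direct partial-sum computation then yields $\sum_{i \leq j} \sigma_i \geq \max(0, j - g)$, which is exactly $\HP_1(A)$ evaluated at $j$, whence $\NP_\nu(P_1) \geq \HP_1(A)$. To pass from $H^1$ to general $H^i$, I would invoke the preservation of majorization under the operation $\sigma \mapsto e_i(\sigma)$: via the ``Robin Hood'' characterization of majorization, it suffices to check a single transfer $(a, b) \mapsto (a - \epsilon, b + \epsilon)$ with $a \geq b$, which induces parallel transfers on the pairs of $i$-subsets $\{S \cup \{a\}, S \cup \{b\}\}$ with $a, b \notin S$, since their subset-sums change by $\mp \epsilon$.

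For Grassmannians $X = G(k, n)$, I would split into two subcases. If $k \in \{1, n-1\}$, i.e., $X \cong \bP^{n-1}$, then $H^{2m}_{\et}(X, \bQ_\ell)$ is one-dimensional, spanned by $H^m$ where $H$ is the hyperplane class; since $f^*H = qH$, $f^*$ acts on $H^{2m}$ as multiplication by $q^m$, so $\NP_\nu(P_{2m})$ is a single segment of slope $m$, equal to $\HP_{2m}(X)$ (the only nonzero Hodge number of $\bP^{n-1}$ in weight $2m$ being $h^{m,m} = 1$). If $1 < k < n-1$, the theorem of Paranjape and Srinivas on self-maps of irreducible rational homogeneous varieties guarantees that $X$ admits no non-invertible self-morphism; hence no $q$-polarized endomorphism with $q > 1$ exists, and the assertion holds vacuously.

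The main obstacle is the passage from $i = 1$ to general $i$ for abelian varieties, which rests on the majorization-preservation lemma above; while this is a classical combinatorial fact, it deserves either a clean formulation or a reference to Katz's slope-filtration framework for F-crystals. A secondary subtlety is the uniform treatment of the $\nu_\ell$ and $\nu_p$ valuations, which is legitimate since $P_i(t) \in \bZ[t]$ is independent of $\ell$ (by Katz--Messing, cf.\ \cref{rmk:integrality}), so the entire analysis takes place on algebraic integers in $\Qbar$.
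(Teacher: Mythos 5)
Your abelian-variety argument is essentially the paper's: reduce to $H^1$ via $H^i \cong \wedge^i H^1$, establish $\NP_\nu(P_1) \geq \HP_1$ from the slope constraints in $[0,1]$ with the symmetry $\sigma \mapsto 1 - \sigma$, then lift the majorization $s(P_1) \prec s(Q_1)$ to $s(P_1)^{(i)} \prec s(Q_1)^{(i)}$. Where you diverge is in proving the lifting lemma: you propose the ``Robin Hood'' transfer characterization (a single transfer on $x$ induces disjoint parallel transfers on the pairs $\{S\cup\{a\}, S\cup\{b\}\}$ of $i$-subsets), whereas the paper invokes Schur's theorem that a symmetric convex function is Schur-convex, applied to the function sending $x$ to the sum of the $l$ largest $k$-subset sums. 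Both are clean and classical; the transfer argument is more elementary and combinatorial, while the Schur-convexity route is shorter once the theorem is on the table. (One small imprecision: $\sigma_i \geq 0$ follows from the eigenvalues being algebraic integers, which is Katz--Messing, not from the archimedean Weil condition in Xie's theorem; you do eventually note the Katz--Messing reliance.)

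The Grassmannian part has a genuine gap. You invoke Paranjape--Srinivas to conclude that $G(k,n)$ with $1 < k < n-1$ admits no noninvertible self-morphism and hence that the statement is vacuous. But Paranjape--Srinivas is a theorem over $\bC$: in positive characteristic the absolute Frobenius of $G(k,n)$ (over $\overline{\bF}_p$) is a noninvertible, $p$-polarized endomorphism, so the claim of vacuity is false precisely in the setting $p > 0$, $p \mid q$ that \cref{qn:NP vs HP} singles out. The paper instead uses Hoffman's classification of graded ring endomorphisms of $H^*(G(k,n))$ --- that $f^*$ on $H^{2m}$ is multiplication by $q^m$, possibly composed with the duality involution when $n = 2k$ --- which is a purely linear-algebraic consequence of hard Lefschetz and hence valid in every characteristic; this immediately pins down the eigenvalues as $\pm q^{m}$ and identifies $\NP$ with $\HP$. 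To repair your argument you would either need a characteristic-$p$ analogue that constrains \emph{all} polarized endomorphisms (including Frobenius), or replace the vacuity claim by a cohomological argument along Hoffman's lines.
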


\begin{remark}
For any $q$-polarized endomorphism $f$ of an irregular smooth projective variety $X$ over $\bk$, using the fact that the Albanese morphism $\alb_X \colon X\to \Alb(X)$ is $f$-equivariant, \cref{qn:NP vs HP} also admits an affirmative answer for $i=1, 2d-1$ (see also \cite[Corollary~1.4]{Hu24}).
\end{remark}

\section{Proof of main results}

\subsection{Proof of Theorem \ref{thm:parity-symmetry} and Corollary \ref{cor:non-arch-NP}}

Keep in mind the assumption and the notation of \cref{thm:parity-symmetry}.
For any $0\leq i\leq d$, consider the following bilinear form
\begin{equation}
\label{eq:bilinear-form}
\begin{array}{ccc}
B\colon H^i_{\et}(X, \bQ_\ell) \times H^i_{\et}(X, \bQ_\ell) & \to & H^{2d}_{\et}(X, \bQ_\ell) \isom \bQ_\ell \\
\qquad (\alpha, y) & \mapsto & \alpha \cup y \cup \cl_X(H_X)^{d-i},
\end{array}
\end{equation}
where $\cl_X\colon \CH^i(X) \to H^{2i}_{\et}(X, \bQ_\ell)$ is the cycle class map.
By Poincaré duality and the hard Lefschetz theorem \cite[Théorème~4.1.1]{Deligne80}, $B$ is nondegenerate;
moreover, $B$ is alternating (resp. symmetric), if $i$ is odd (resp. even).

\begin{lemma}
\label{lemma:pairing}
For each $0\leq i\leq d$, let $\varphi_i$ denote the following normalized linear transformation
\[
q^{-i/2} f^*|_{H^i_{\emph{\et}}(X, \bQ_\ell)}
\]
on $H^i_{\emph{\et}}(X, \bQ_\ell)$.
Then the bilinear form $B$ defined in \eqref{eq:bilinear-form} is preserved by $\varphi_i$, i.e.,
for any $\alpha, \beta\in H^i_{\emph{\et}}(X, \bQ_\ell)$, we have
\[
B(\varphi_i(\alpha), \varphi_i(\beta)) = B(\alpha, \beta).
\]
In particular, $\varphi_i$ is symplectic (resp. orthogonal), if $i$ is odd (resp. even).
\end{lemma}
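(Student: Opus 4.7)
The plan is a direct computation that unwinds the definitions of $B$ and $\varphi_i = q^{-i/2} f^*$, relying on three standard facts: multiplicativity of $f^*$ with respect to cup product, compatibility of the cycle class map with pullback, and the fact that $f^*$ acts on $H^{2d}_{\et}(X,\bQ_\ell) \isom \bQ_\ell$ by multiplication by $\deg f$.

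First I would record some basic facts about polarized endomorphisms. Since $f^*H_X$ is again ample, $f$ contracts no positive-dimensional subvariety, so $f$ is finite and surjective; the identity $(f^*H_X)^d = q^d\, H_X^d$ then forces $\deg f = q^d$. Consequently $f^*$ acts on $H^{2d}_{\et}(X,\bQ_\ell) \isom \bQ_\ell$ as multiplication by $q^d$. Next, compatibility of $\cl_X$ with pullback, applied to $f^*H_X \sim qH_X$, yields $f^*\cl_X(H_X) = q\,\cl_X(H_X)$ in $H^2_{\et}(X,\bQ_\ell)$, hence $f^*(\cl_X(H_X)^{d-i}) = q^{d-i}\cl_X(H_X)^{d-i}$.

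Combining these with $f^*(\alpha)\cup f^*(\beta) = f^*(\alpha\cup\beta)$, I would compute
\[
(f^*\alpha)\cup(f^*\beta)\cup\cl_X(H_X)^{d-i} \;=\; q^{-(d-i)}\, f^*\!\bigl(\alpha\cup\beta\cup\cl_X(H_X)^{d-i}\bigr) \;=\; q^{-(d-i)} f^*\bigl(B(\alpha,\beta)\bigr),
\]
and therefore
\[
B(\varphi_i(\alpha),\varphi_i(\beta)) \;=\; q^{-i}\cdot q^{-(d-i)}\cdot q^{d}\cdot B(\alpha,\beta) \;=\; B(\alpha,\beta),
\]
which is the desired invariance. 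The \emph{in particular} clause is then immediate: by graded commutativity of cup product, $B$ is symmetric when $i$ is even and alternating when $i$ is odd, and by Poincaré duality combined with hard Lefschetz (as already observed just above the lemma) it is nondegenerate; preserving $B$ therefore places $\varphi_i$ in the corresponding orthogonal or symplectic group.

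I do not anticipate any serious obstacle; the argument is essentially bookkeeping with standard properties of étale cohomology, cycle classes, and the top-degree trace. The only subtlety worth flagging is that $f^*$ acts on $H^{2d}_{\et}(X,\bQ_\ell)$ as multiplication by $\deg f = q^d$ rather than by $1$, and it is precisely this factor that cancels the $q^{-d}$ coming from the two normalizations $q^{-i/2}$ together with the pulled-back power of $\cl_X(H_X)$.
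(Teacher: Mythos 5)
Your proposal is correct and follows essentially the same route as the paper: identify $\deg f = q^d$, pull $\cl_X(H_X)^{d-i}$ back through $f^*$ at the cost of $q^{-(d-i)}$, and cancel with the normalization $q^{-i}$. The only cosmetic difference is that the paper invokes the projection formula directly to produce the factor $\deg f$, while you instead push everything into $f^*(\alpha\cup\beta\cup\cl_X(H_X)^{d-i})$ and use that $f^*$ acts on $H^{2d}_{\mathrm{\acute et}}(X,\bQ_\ell)$ by multiplication by $\deg f$ — the same fact in a slightly different guise.
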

\begin{proof}
Since $f^*H_X \sim qH_X$, it is well known that such an $f$ is a finite morphism of degree $\deg(f)=q^d$.
By the projection formula, we have for any $\alpha, \beta\in H^i_{\et}(X, \bQ_\ell)$,
\begin{align*}
B(f^*\alpha, f^*\beta) & = f^*\alpha \cup f^*\beta \cup \cl_X(H_X)^{d-i} \\
& = f^*\alpha \cup f^*\beta \cup \frac{1}{q^{d-i}} \cl_X(f^*H_X)^{d-i} \\
& = \frac{\deg(f)}{q^{d-i}} \, \alpha \cup \beta \cup \cl_X(H_X)^{d-i} \\
& = q^i B(\alpha, \beta).
\end{align*}
Thus, it follows that $B(\varphi_i(\alpha), \varphi_i(\beta)) = q^{-i} B(f^*\alpha, f^*\beta) = B(\alpha, \beta)$.
\end{proof}

\begin{proof}[Proof of \cref{thm:parity-symmetry}]
According to a fundamental result of Katz and Messing \cite{KM74}, we now know that the characteristic polynomial $P_i(t)$ of $f^*|_{H^i_{\et}(X, \bQ_\ell)}$ has integer coefficients, whose roots are algebraic integers.
Note, by Poincaré duality, that the transpose of $f^*|_{H^{2d-i}_{\et}(X, \bQ_\ell)}$ is equal to $q^d (f^*|_{H^i_{\et}(X, \bQ_\ell)})^{-1}$.
Thus, it follows that
\[
P_{2d-i}(t) = \det (t \id - f^*|_{H^{2d-i}_{\et}(X, \bQ_\ell)}) = \frac{(-t)^{b_{i}}}{\det(f^*|_{H^i_{\et}(X, \bQ_\ell)})} P_i(q^d/t).
\]
In particular, the eigenvalues of $f^*|_{H^{2d-i}_{\et}(X, \bQ_\ell)}$ are precisely given by $q^d/\lambda$, where the  $\lambda$ are the eigenvalues of $f^*|_{H^{i}_{\et}(X, \bQ_\ell)}$.
Replacing $t$ by $q^{2d-i}/t$, one obtains that
\[
t^{b_{2d-i}} P_{2d-i}(q^{2d-i}/t) = \frac{(-1)^{b_{i}}q^{(2d-i)b_{i}}}{\det(f^*|_{H^i_{\et}(X, \bQ_\ell)})} P_i(t/q^{d-i}).
\]
Therefore, it suffices to prove \cref{thm:parity-symmetry} for $0\leq i\leq d$.

Fix any $0\leq i\leq d$ and consider the nondegenerate bilinear form $B$ defined in \eqref{eq:bilinear-form}.
Thanks to \cref{lemma:pairing}, the normalized linear transformation $\varphi_i$ of $H^i_{\et}(X, \bQ_\ell)$ preserves $B$.
It follows that $\varphi_i \in \Sp(H^i_{\et}(X, \bQ_\ell), B)$ (resp. $\mathrm{O}(H^i_{\et}(X, \bQ_\ell), B)$), the symplectic (resp. orthogonal) group of $(H^i_{\et}(X, \bQ_\ell), B)$, if $i$ is odd (resp. even).
In particular, the determinant of $\varphi_i$ is $\pm 1$, and the inverse $\varphi_i^{-1}$ of $\varphi_i$ is similar to the transpose $\varphi_i^\sT$ of $\varphi_i$.
Equivalently, the transpose of $f^*|_{H^i_{\et}(X, \bQ_\ell)}$ is similar to $q^i \, (f^*|_{H^i_{\et}(X, \bQ_\ell)})^{-1}$.
The assertion~\ref{thm:multiplicity} thus follows.

Let $P_{\varphi_i}(t)\in \bQ(\sqrt{q})[t]$ denote the characteristic polynomial of the linear map $\varphi_i$.
Then
\begin{align*}
P_{\varphi_i}(t) & = P_{\varphi_i^{-1}}(t) = \det(t \id - \varphi_i^{-1}) = \det(\varphi_i)^{-1} \det(t \, \varphi_i - \id) \\
& = \det(\varphi_i) (-t)^{b_i} \det(t^{-1} \id - \varphi_i) = \det(\varphi_i) (-t)^{b_i} P_{\varphi_i}(t^{-1}).
\end{align*}
On the other hand, by the definition of $\varphi_i$, one has 
$P_{\varphi_i}(t) = q^{-\frac{ib_{i}}{2}} P_i(q^{i/2} \, t)$.
The assertion~\ref{thm:functional-eq} thus follows.

Suppose now that $0<i\leq d$ is odd and hence $\varphi_i \in \Sp(H^i_{\et}(X, \bQ_\ell), B)$.
Then the determinant of $\varphi_i$ is one and the $i$-th Betti number $b_i$ is even.
In particular, the characteristic polynomial $P_{\varphi_i}(t)$ of $\varphi_i$ is self-reciprocal, i.e., $P_{\varphi_i}(t) = t^{b_i}P_{\varphi_i}(t^{-1})$.
Recall that the characteristic polynomial of $f^*|_{H^i_{\et}(X, \bQ_\ell)}$ has integer coefficients, whose roots are algebraic integers denoted by $\lambda_{i,j}$ with $1\leq j\leq b_i$ (see \cref{rmk:integrality}).
Xie's recent brilliant result shows that
\[
\lambda_{i,j} = q^{i/2} \, e^{\sqrt{-1} \, \theta_{i,j}}
\]
with $\theta_{i,j}\in [0, 2\pi)$ for all $j$ (see \cite[Corollary~1.5]{Xie-GWRH} or \cref{thm:Xie} which extends \cite[Th\'eor\`eme~1]{Serre60} to arbitrary characteristic).
Hence the eigenvalues of the normalized $\varphi_i$ are exactly these $e^{\sqrt{-1} \, \theta_{i,j}}$.
Note that the sum of the multiplicities of the two real eigenvalues $\pm 1$ is even, because $P_{\varphi_i}(t)\in \bQ(\sqrt{q})[t]$ and $\deg P_{\varphi_i}(t) = b_i$ is even.
The multiplicity of the eigenvalue $-1$ is also even, since the determinant of $\varphi_i$ is one.
It follows that the two real roots $\pm 1$ of $P_{\varphi_i}(t)$ have even multiplicity (possibly zero), so are the real eigenvalues $\pm q^{i/2}$ of $f^*|_{H^i_{\et}(X, \bQ_\ell)}$.
This completes the proof of \cref{thm:parity-symmetry}.
\end{proof}

\begin{remark}
\label{rmk:zeta}
It follows from the above proof that $\epsilon_i \equiv b_i + \mu_{-q^{i/2}} \pmod 2$ in the functional equation for any $0\leq i\leq 2d$; moreover, if $i$ is odd, then $\epsilon_i = 0$.
By Poincaré duality and the Lefschetz trace formula, one also obtains a similar functional equation for the \textit{Lefschetz dynamical zeta function} $Z_f(t)$ associated with $X$ and $f$ defined by $\exp(\sum_{n\geq 1} N_n t^n /n)$, where $N_n$ is the number of fixed points of $f^n$ (counted with multiplicity), as follows:
\[
Z_f(q^{-d}t^{-1}) = (-1)^{\chi + \mu_{-q^{d/2}}}  \, q^{d\chi/2} \, t^{\chi} Z(t),
\]
where $\chi \coloneqq \chi(X) \coloneqq \sum_{i=0}^{2d} (-1)^i b_i(X)$ is the Euler characteristic of $X$ and as before $\mu_{-q^{d/2}}$ denotes the multiplicity of $-q^{d/2}$ as an eigenvalue of $f^*|_{H^d_{\et}(X, \bQ_\ell)}$.
\end{remark}

\begin{proof}[Proof of \cref{cor:non-arch-NP}]
Recall again by Katz--Messing \cite{KM74}, the eigenvalues of $f^*|_{H^i_{\et}(X, \bQ_\ell)}$ are algebraic integers.
When $\ell \nmid q$, the $\ell$-adic valuations of the eigenvalues $\lambda$ of $f^*|_{H^i_{\et}(X, \bQ_\ell)}$ are nonnegative.
On the other hand, Poincaré duality implies that $q^d/\lambda$ is an eigenvalue of $f^*|_{H^{2d-i}_{\et}(X, \bQ_\ell)}$, and hence also has nonnegative $\ell$-adic valuations.
It follows that $\lambda$ is an $\ell$-adic unit in a suitable field extension over $\bQ_\ell$.

When $\ell \! \mid \! q$, thanks to Xie's \cref{thm:Xie}, if $\lambda$ is an eigenvalue of $f^*|_{H^i_{\et}(X, \bQ_\ell)}$, then so is its complex conjugate $\ol{\lambda}$, and moreover, $\lambda \cdot \ol{\lambda} = q^i$.
This implies that $\nu_\ell(\lambda) + \nu_\ell(\ol{\lambda}) = i$, and hence the Newton polygon $\NP_{\nu_\ell}(P_i)$ of the characteristic polynomial $P_i(t)$ is symmetric.

The remaining assertions for the Newton polygon $\NP_{\nu_p}(P_i)$ follow by similar reasoning.
\end{proof}

\subsection{Proof of Proposition \ref{prop:AV&Gr}}

Recall that for a monic polynomial $P(t) = t^{n} + a_1 t^{n-1} + \dots + a_n$ over a nonarchimedean valued field $(K, \nu)$, its Newton polygon $\NP_\nu(P)$ is completely characterized by a finite sequence of numbers
\[
s_1(P) \leq s_2(P) \leq \dots \leq s_n(P),
\]
where the $s_i(P)$ are called the \textit{slopes} of the polygon $\NP_\nu(P)$, corresponding to the $\nu$-valuations of the roots of $P(t)$.
In particular, the Newton polygon $\NP_\nu(P)$ is represented by the piecewise linear function $\varphi_P \colon [0,n]\to \bR$ such that $\varphi_P(0) = 0$ and for $k=1,2,\ldots,n$,
\[
\varphi_P(k) = s_1(P) + s_2(P) + \dots + s_k(P).
\]
Hence for any two monic polynomials $P(t), Q(t) \in K[t]$ of the same degree $n$, the associated Newton polygons satisfy $\NP_\nu(P) \geq \NP_\nu(Q)$ if and only if for all $k=1,2,\ldots,n$,
\[
\sum_{i=1}^{k} s_i(P) \geq \sum_{i=1}^{k} s_i(Q).
\]
This is reminiscent of the theory of majorization.

For any $x=(x_1, x_2, \ldots, x_n) \in \bR^n$, let
\[
x_{(1)} \leq x_{(2)} \leq \dots \leq x_{(n)} \quad \text{(resp. } x_{[1]} \geq x_{[2]} \geq \dots \geq x_{[n]} \text{)}
\]
denote the components of $x$ in nondecreasing (resp. nonincreasing) order.

\begin{definition}[{cf.~\cite[Definition~A.1]{MOA11}}]
\label{def:majorization}
For any $x, y \in \bR^n$, we say that $x$ is \textit{majorized} by $y$ (or $y$ \textit{majorizes} $x$) and write $x \prec y$, if for all $k=1,2,\dots,n-1$,
\begin{align*}
\label{eq:majorization}
\sum_{i=1}^k x_{[i]} \leq \sum_{i=1}^k y_{[i]}, \ \text{ and } \ 
\sum_{i=1}^n x_{[i]} = \sum_{i=1}^n y_{[i]}.
\end{align*}
Equivalently, $x \prec y$ if for all $k=1,2,\dots,n-1$,
\begin{align*}
\sum_{i=1}^k x_{(i)} \geq \sum_{i=1}^k y_{(i)}, \ \text{ and } \ 
\sum_{i=1}^n x_{(i)} = \sum_{i=1}^n y_{(i)}.
\end{align*}
\end{definition}

To show \cref{prop:AV&Gr} in the case of abelian varieties, we need the following classic result essentially due to Schur.

\begin{theorem}[{cf. \cite[Proposition~C.2]{MOA11}}]
\label{thm:Schur23}
If $\varphi \colon \bR^n \to \bR$ is a symmetric and convex function, then $\varphi$ is Schur-convex, i.e., $x \prec y$ implies $\varphi(x) \leq \varphi(y)$.
\end{theorem}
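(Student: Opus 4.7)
The plan is to prove this classical Schur-convexity criterion via the Hardy--Littlewood--P\'olya characterization of majorization together with the Birkhoff--von Neumann theorem on doubly stochastic matrices. Concretely, I would first reduce the claim to an algebraic statement about doubly stochastic matrices, and then exploit the symmetry and convexity of $\varphi$ in a single short computation.

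\textbf{Step 1: Reformulating majorization.} The first step is to invoke the Hardy--Littlewood--P\'olya theorem, which asserts that for $x, y \in \bR^n$ the relation $x \prec y$ (in the sense of \cref{def:majorization}) holds if and only if there exists a doubly stochastic matrix $D = (d_{ij}) \in \Mat_n(\bR)$ (i.e., $d_{ij}\geq 0$, with all row sums and all column sums equal to $1$) such that $x = Dy$. One direction of this equivalence is immediate from the definition of $\prec$; the nontrivial direction is classical (see, e.g., \cite[\S2.B]{MOA11}) and I would simply cite it.

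\textbf{Step 2: Applying Birkhoff--von Neumann.} Next, by the Birkhoff--von Neumann theorem, every doubly stochastic matrix $D$ can be written as a convex combination
\[
D = \sum_{k=1}^{N} \lambda_k P_k,
\]
where each $P_k$ is an $n\times n$ permutation matrix, $\lambda_k \geq 0$, and $\sum_{k=1}^N \lambda_k = 1$. Combining this with Step 1, whenever $x \prec y$ we can write
\[
x \;=\; \sum_{k=1}^{N} \lambda_k \, (P_k y).
\]

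\textbf{Step 3: Using convexity and symmetry of $\varphi$.} Finally, I would apply the two hypotheses on $\varphi$ in sequence. Using convexity,
\[
\varphi(x) \;=\; \varphi\Bigl(\sum_{k=1}^{N} \lambda_k P_k y\Bigr) \;\leq\; \sum_{k=1}^{N} \lambda_k \, \varphi(P_k y).
\]
Using the fact that $\varphi$ is symmetric, i.e., invariant under coordinate permutations, we have $\varphi(P_k y) = \varphi(y)$ for every $k$. Substituting this and using $\sum_k \lambda_k = 1$ yields $\varphi(x) \leq \varphi(y)$, which is exactly Schur-convexity.

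The only real obstacle is that the argument relies on two nontrivial black-box inputs, namely the Hardy--Littlewood--P\'olya representation and Birkhoff--von Neumann. Both are standard and well-documented in \cite{MOA11}, so I would invoke them by reference rather than reproving them. An alternative (and self-contained) route would be to bypass Birkhoff--von Neumann by showing that $x \prec y$ implies $x$ can be obtained from $y$ by a finite sequence of \emph{$T$-transforms} (Robin Hood transfers of the form $(y_i, y_j) \mapsto ((1-t)y_i + t y_j,\, t y_i + (1-t) y_j)$ with $t \in [0,1]$), and then observing that each such transfer does not increase $\varphi$ by a two-variable application of convexity plus symmetry; this gives a slightly more elementary but essentially equivalent argument.
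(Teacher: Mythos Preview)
Your proof is correct and follows one of the standard routes to this classical result. Note, however, that the paper does not actually supply its own proof of this theorem: it is stated with a reference to \cite[Proposition~C.2]{MOA11} and then invoked as a black box in the proof of \cref{lemma:wedge-product-preserves-majorization}. So there is nothing to compare against beyond observing that your argument (Hardy--Littlewood--P\'olya plus Birkhoff--von Neumann, or alternatively the $T$-transform route) is exactly the kind of proof one finds in the cited source.
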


For any $x\in \bR^n$ and $k=1,2,\dots,n$, denote
\begin{equation}
\label{eq:compound}
x^{(k)} = (x_{i_1}+x_{i_2}+\dots+x_{i_k})_{1\leq i_1<i_2<\dots<i_k\leq n} \in \bR^{\binom{n}{k}},
\end{equation}
where the indices $(i_1,i_2,\dots,i_k)\in \{1,2,\dots,n\}^k$ are sorted in the following natural way: $(i_1,i_2,\dots,i_k) < (i'_1,i'_2,\dots,i'_k)$ if the left-most nonzero component of $(i_1-i'_1,i_2-i'_2,\dots,i_k-i'_k)$ is negative.
In particular, $x^{(1)}=x$, $x^{(n)}=x_1+x_2+\dots+x_n$, and
\[
x^{(2)}=(x_1+x_2, x_1+x_3, \dots, x_1+x_n, x_2+x_3, \dots, x_{n-1}+x_n)\in \bR^{\binom{n}{2}}.
\]
This gives rise to an $\bR$-linear map
\begin{align*}
c_k\colon \bR^n \to \bR^{\binom{n}{k}}, \quad x \mapsto x^{(k)}.
\end{align*}

\begin{lemma}
\label{lemma:wedge-product-preserves-majorization}
Let $x, y\in \bR^n$ such that $x \prec y$.
Then for any $k=2,3,\dots,n$, we have $x^{(k)} \prec y^{(k)}$.
\end{lemma}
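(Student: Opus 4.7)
The plan is to derive the majorization $x^{(k)} \prec y^{(k)}$ by feeding a carefully chosen family of symmetric convex test functions on $\bR^n$ into Schur-convexity (\cref{thm:Schur23}).

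Specifically, for each $m \in \{1, 2, \ldots, \binom{n}{k}\}$, I would define $\Phi_m \colon \bR^n \to \bR$ to be the sum of the top $m$ entries of $z^{(k)} \in \bR^{\binom{n}{k}}$, which can be written concretely as
\[
\Phi_m(z) = \sum_{j=1}^m (z^{(k)})_{[j]} = \max_{|\mathcal{S}|=m} \sum_{S \in \mathcal{S}} \sum_{i \in S} z_i,
\]
where the maximum runs over all collections $\mathcal{S}$ of $m$ distinct $k$-subsets of $\{1, \ldots, n\}$. As a maximum of finitely many linear functionals in $z$, the function $\Phi_m$ is convex; and since any permutation of the coordinates $z_1, \ldots, z_n$ merely permutes the $k$-subsets $S$, it is also symmetric. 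By \cref{thm:Schur23}, $\Phi_m$ is therefore Schur-convex on $\bR^n$, so the hypothesis $x \prec y$ immediately yields $\Phi_m(x) \leq \Phi_m(y)$ for every $m$.

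At the top index $m = \binom{n}{k}$, each coordinate $z_i$ appears in exactly $\binom{n-1}{k-1}$ of the $k$-subsets, so $\Phi_{\binom{n}{k}}(z) = \binom{n-1}{k-1}(z_1 + \dots + z_n)$; hence $\Phi_{\binom{n}{k}}(x) = \Phi_{\binom{n}{k}}(y)$ follows directly from the total-sum equality $\sum x_i = \sum y_i$ built into $x \prec y$. Combining this equality with the inequalities $\Phi_m(x) \leq \Phi_m(y)$ for $m < \binom{n}{k}$ gives precisely the defining condition of $x^{(k)} \prec y^{(k)}$ in \cref{def:majorization}.

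The argument has no real obstacle beyond identifying the correct test functions: once the top-$m$ partial-sum functionals $\Phi_m$ are isolated, symmetry (by relabeling subsets) and convexity (as a max of linear functionals) are immediate, and \cref{thm:Schur23} closes the matter in one line. The same line of reasoning would extend to other $S_n$-equivariant linear maps $\bR^n \to \bR^N$ built from $k$-subsets, which may be useful elsewhere in the proof of \cref{prop:AV&Gr}.
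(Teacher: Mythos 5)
Your proof is correct and essentially identical to the paper's. You define $\Phi_m = s_m \circ c_k$ (the sum of the top $m$ entries of $z^{(k)}$), argue convexity directly as a maximum of linear functionals rather than as a composition of a convex function with the linear map $c_k$, verify symmetry by noting that permuting $z_1,\dots,z_n$ merely permutes the $k$-subsets, invoke \cref{thm:Schur23}, and close with the same $\binom{n-1}{k-1}$-counting argument for the top-sum equality — all exactly as in the paper.
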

\begin{proof}
Let $k$ be fixed.
Let $l$ be an arbitrary positive integer $\leq \binom{n}{k}$.
Define a function $s_l\colon \bR^{\binom{n}{k}} \to \bR$ as follows: for $z\in \bR^{\binom{n}{k}}$, $s_l(z)$ is the sum of the $l$ largest components of $z$, i.e.,
\[
s_l(z) = \sum_{j=1}^l z_{[j]}.
\]
Since $s_l$ is the maximum of all possible sums of $l$ different components of $z$, it is convex.
As $c_k$ is $\bR$-linear, the composite function $s_l\circ c_k \colon \bR^n \to \bR$ is also convex.
On the other hand, by definition, $s_l\circ c_k$ is symmetric.
Therefore, thanks to \cref{thm:Schur23}, $s_l\circ c_k$ is Schur-convex; in particular, we prove that $s_l(x^{(k)}) \leq s_l(y^{(k)})$.

When $l=\binom{n}{k}$, it is easy to see that
\begin{align*}
s_{l}(x^{(k)}) &= \sum_{1\leq i_1<i_2<\dots<i_k\leq n} (x_{i_1}+x_{i_2}+\dots+x_{i_k}) \\
&= \binom{n-1}{k-1} \sum_{i=1}^n x_i = \binom{n-1}{k-1} \sum_{i=1}^n y_i = s_{l}(y^{(k)}).
\end{align*}
Hence by \cref{def:majorization}, we have shown that $x^{(k)} \prec y^{(k)}$.
\end{proof}

\begin{proof}[Proof of \cref{prop:AV&Gr}]
Let $X$ be a smooth projective variety of dimension $d$ over $\bk$, and let $f$ be a $q$-polarized endomorphism of $X$.
Let $p,q,\ell,\nu_p,\nu_\ell$ be as in \cref{qn:NP vs HP}.

We first consider the case where $X$ is an abelian variety.
By \cref{cor:non-arch-NP}\ref{cor:ell-mid-q} and \ref{cor:p-mid-q}, the Newton polygons $\NP_{\nu_p}(P_1)$ and $\NP_{\nu_\ell}(P_1)$ are symmetric with slopes in $[0,1]\cap \bQ$.
Any such symmetric Newton polygon lies between the supersingular polygon (a single segment of slope $1/2$ and length $2d$) and the ordinary polygon (consisting of two segments connecting the points $(0,0)$, $(d,0)$, and $(2d,d)$).
The latter polygon coincides with the Hodge polygon $\HP_1(X)$ of weight $1$ by definition.
We thus prove that $\NP_{\nu_\ell}(P_1) \geq \HP_1(X)$ and $\NP_{\nu_p}(P_1) \geq \HP_1(X)$.
For $i\geq 2$, since $H^i_{\et}(X, \bQ_\ell)$ is canonically isomorphic with the wedge product $\wedge^i H^1_{\et}(X, \bQ_\ell)$, the characteristic polynomial $P_i(t)$ of $f^*|_{H^i_{\et}(X, \bQ_\ell)}$ is completely determined by $P_1(t)$.
Precisely, denote the slopes of $\NP_{\nu_\ell}(P_1)$ by $s(P_1)=(s_1(P_1),s_2(P_1),\dots,s_{2d}(P_1))$ in nondecreasing order.
Then the slopes of $\NP_{\nu_\ell}(P_i)$ are exactly $s(P_1)^{(i)}$ (see \cref{eq:compound}; note that $s(P_1)^{(i)}$ may not be in nondecreasing order).
On the other hand, by thinking of $\HP_1(X)$ as the Newton polygon of the polynomial $Q_1(t) = (t-1)^d(t-q)^d$ whose slopes are $s(Q_1)=(0,\dots,0,1,\dots,1)$, we see that the slopes of $\HP_i(X)$ are similarly given by $s(Q_1)^{(i)}$.
Since the Newton polygon $\NP_{\nu_\ell}(P_1)$ and the Hodge polygon $\HP_1(X)$ have the same end point $(2d, d)$, one has $s(P_1) \prec s(Q_1)$ (see \cref{def:majorization}).
It thus follows from \cref{lemma:wedge-product-preserves-majorization} that $s(P_1)^{(i)} \prec s(Q_1)^{(i)}$ for all $2\leq i\leq 2d$, i.e., $\NP_{\nu_\ell}(P_i) \geq \HP_i(X)$.
The assertion $\NP_{\nu_p}(P_i) \geq \HP_i(X)$ follows by similar reasoning.

Next, suppose that $X$ is the Grassmannian $G\coloneqq G(k, n)$ of $k$-dimensional subspaces in an $n$-dimensional vector space over $\bk$.
It is well known that $h^{i,j}(G) = 0$ for all $i\neq j$ and $h^{j,j}(G) = b_{2j}(G)$.
Fix an even $0\leq i\leq 2d=2k(n-k)$.
The associated Hodge polygon $\HP_{i}(G)$ of weight $i$ is the segment of slope $i/2$ connecting the points $(0,0)$ and $(b_{i}, ib_{i}/2)$.
On the other hand, according to a classification result on endomorphisms of the cohomology ring of $G$ due to Hoffman \cite{Hoffman84}, the map $f^*|_{H^{i}_{\et}(G, \bQ_\ell)}$ is either multiplication by $q^{i/2}$, or the composition of multiplication by $q^{i/2}$ with an involution (the latter case occurring only when $n=2k$).\footnote{Hoffman's key idea is to apply the hard Lefschetz theorem to solve a homogeneous system of linear equations, a method that carries over verbatim to positive characteristic. In a recent joint work with Jiang \cite{HJ25}, unaware of \cite{Hoffman84} at the time, we also employed this idea to establish the vanishing of certain intersection numbers.}
In either case, the eigenvalues of $f^*|_{H^{i}_{\et}(G, \bQ_\ell)}$ are precisely $\pm q^{i/2}$.
It follows that the associated Newton polygon $\NP_{\nu_\ell}(P_{i})$ or $\NP_{\nu_p}(P_{i})$ of the characteristic polynomial $P_i(t)$ consists of a single segment of slope $i/2$ and length $b_{i}$, and thus coincides with the Hodge polygon $\HP_{i}(G)$.
\end{proof}

To conclude, for the reader's convenience, we include an example of an abelian surface to illustrate \cref{prop:AV&Gr}.

\begin{example}[{cf.~\cite[Example~7.1]{MZ18a}}]
\label{example:MZ}
Let $X$ be the self-product $E^2$ of an elliptic curve $E$ over $\bk$, and let $f$ be an endomorphism of $X$ defined by the matrix
\[
\begin{pmatrix} 
1 & -5 \\
1 & 1 
\end{pmatrix}.
\]
This defines a $6$-polarized endomorphism of $X$.
For abelian varieties, it suffices to consider the action of $f$ on $H_{\et}^1(X, \bQ_\ell)$.
The characteristic polynomial of $f^*|_{H_{\et}^1(X, \bQ_\ell)}$ is
\[
P_1(t) = (t^2 - 2t + 6)^2 = t^4 - 4t^3 + 16t^2 - 24t + 36.
\]
The associated Newton polygon $\NP_{\nu_2}(P_1)$ is a single segment connecting the points $(0,0)$ and $(4,2)$.
The Newton polygon $\NP_{\nu_3}(P_1)$ coincides with the Hodge polygon $\HP_1(X)$ of weight $1$, which consists of two segments connecting the points $(0,0)$, $(2,0)$, and $(4,2)$.
In either case, we see that $\NP_{\nu_2}(P_1) \geq \HP_1(X)$ and $\NP_{\nu_3}(P_1) \geq \HP_1(X)$.
\end{example}

\section*{Acknowledgments}

The author was supported by NSFC Grant No.~12371045 and a start-up grant from Nanjing University.
This work was carried out during the author's visits to CIRM in Marseille in January 2025 and to the University of Waterloo in Winter 2025.
He was grateful to Jason Bell, Charles Favre, Matthew Satriano, Junyi Xie, and Yishu Zeng for stimulating discussions.
The author is grateful to the referees for their helpful comments and suggestions to improve the paper.

\providecommand{\bysame}{\leavevmode\hbox to3em{\hrulefill}\thinspace}
\providecommand{\MR}{\relax\ifhmode\unskip\space\fi MR }
\providecommand{\MRhref}[2]{%
  \href{http://www.ams.org/mathscinet-getitem?mr=#1}{#2}
}
\providecommand{\href}[2]{#2}

\end{document}